\pdfoutput=1
\documentclass{amsart}

\usepackage[mathscr]{eucal}
\usepackage{amssymb}
\usepackage[dvipsnames]{xcolor} 
\usepackage[normalem]{ulem}
\usepackage{wasysym}
\usepackage{amsthm}
\usepackage{bbold}
\usepackage{comment}
\usepackage{enumitem}
\usepackage{amsmath}
\usepackage{tikz-cd}
\usepackage{stmaryrd} 
\usepackage{etoolbox} 
\usepackage{microtype}
\usepackage{adjustbox}
\usepackage{ mathdots }
\usepackage{bbm}
\usepackage{mathtools}


\usepackage[unicode]{hyperref} 

\definecolor{dark-red}{rgb}{0.5,0.15,0.15}
\definecolor{dark-blue}{rgb}{0.15,0.15,0.6}
\definecolor{dark-green}{rgb}{0.15,0.6,0.15}

\hypersetup{
    colorlinks, linkcolor=OliveGreen,
    citecolor=OliveGreen, urlcolor=OliveGreen
}

\usepackage[nameinlink,capitalise,noabbrev]{cleveref}


\overfullrule=1mm


\setcounter{tocdepth}{1}


\usepackage[all]{xy}
\xyoption{line}
\usepackage{graphicx}
\usepackage{caption}



\numberwithin{equation}{section}

\newtheorem{Thm}[equation]{Theorem}
\newtheorem*{Thm*}{Theorem}
\newtheorem*{MainThm*}{Main Theorem}
\newtheorem{Prop}[equation]{Proposition}
\newtheorem{Lem}[equation]{Lemma}

\newtheorem*{Cor*}{Corollary}

\newtheorem*{Que*}{Question}

\theoremstyle{remark}
\newtheorem{Def}[equation]{Definition}

\newtheorem{Nota}[equation]{Notation}
\newtheorem{Rem}[equation]{Remark}

\tikzset{
    labelrotatebelow/.style={anchor=north, rotate=90, inner sep=1.0mm}
}
\tikzset{
    labelrotateabove/.style={anchor=south, rotate=90, inner sep=1.0mm}
}
\SelectTips{cm}{10}

\newcommand{\nc}{\newcommand}
\nc{\dmo}{\DeclareMathOperator}
\renewcommand{\emptyset}{\varnothing}

\usepackage[disable]{todonotes}

\dmo{\End}{End}
\dmo{\Mor}{Mor}
\dmo{\Hom}{Hom}
\dmo{\id}{id}
\dmo{\im}{im}
\dmo{\Ker}{Ker}
\dmo{\opname}{op}

\nc{\bbL}{\mathbb{L}}
\nc{\bbA}{\mathbb{A}}
\nc{\bbE}{\mathbb{E}}
\nc{\bbN}{\mathbb{N}}
\nc{\bbQ}{\mathbb{Q}}
\nc{\bbZ}{\mathbb{Z}}
\nc{\bbF}{\mathbb{F}}
\nc{\bbS}{\mathbb{S}}
\nc{\cat}[1]{\mathscr{#1}}

\nc{\Mid}{\,\big|\,}
\nc{\SET}[2]{\big\{\,#1\Mid#2\,\big\}}
\dmo{\Ho}{Ho}

\renewcommand{\leq}{\leqslant}

\newcommand{\sfL}{\mathsf{L}}

\newcommand{\sfW}{\mathsf{W}}
\newcommand{\sfF}{\mathsf{F}}
\newcommand{\sfC}{\mathsf{C}}
\newcommand{\sfR}{\mathsf{R}}

\newcommand{\Set}{\mathsf{Set}}
\newcommand{\Vect}{\mathsf{Vect}}

\newcommand{\bij}{\mathsf{bij}}
\newcommand{\inj}{\mathsf{inj}}
\newcommand{\surj}{\mathsf{surj}}
\newcommand{\any}{\mathsf{any}}
\newcommand{\types}{\mathsf{types}}
\newcommand{\pointed}{\mathsf{pointed}}
\newcommand{\iso}{\mathsf{iso}}
\newcommand{\mon}{\mathsf{mon}}
\newcommand{\epi}{\mathsf{epi}}
\renewcommand{\hom}{\mathsf{hom}}

\newcounter{enum-resume-hack}
\Crefname{Thm}{Theorem}{Theorems}
\Crefname{Prop}{Proposition}{Propositions}
\Crefname{Lem}{Lemma}{Lemmas}
\Crefname{thmx}{Theorem}{Theorems}

\begin{document}

\title{The nine model category structures on the category of sets}

\author{Omar Antol\'{\i}n-Camarena}
\address{Instituto de Matem{\'a}ticas, National Autonomous University of Mexico,
Mexico City, Mexico}
\email{omar@matem.unam.mx}
\urladdr{https://www.matem.unam.mx/~omar/}

\author{Tobias Barthel}
\address{Max Planck Institute for Mathematics, Bonn, Germany}
\email{tbarthel@mpim-bonn.mpg.de}
\urladdr{https://sites.google.com/view/tobiasbarthel/}

\date{\today}

\subjclass[2020]{18G55; 55U35}


\begin{abstract}
    We give a proof of the folklore theorem, attributed to Goodwillie, that there are precisely nine model structures on the category $\Set$ of sets. This result is deduced from a complete study of lifting problems and the ensuing classification of all weak factorization systems on $\Set$. Moreover, we determine the Quillen equivalences between these model structures and exhibit an explicit example of equivalent model structures that cannot be realized by a single Quillen adjunction. 
\end{abstract}

\maketitle

\tableofcontents

\section{Introduction}\label{sec:introduction}

Quillen \cite{Quillen1967} introduced model structures in order to provide an axiomatic framework for formalizing homotopical reasoning within a given category. This theory simultaneously generalizes homological algebra for categories of chain complexes and homotopy theory of topological spaces to a variety of algebraic, geometric, or topological contexts. In usual applications, one constructs a specific model structure for the purpose of capturing the homotopical features of the situation one wishes the study; however, we may wonder if and when it is possible to classify all possible model structures on a given category.

The prototypical example one might attempt such a classification for is the category $\Set$ of sets and functions between them. Perhaps surprisingly, in this case, one can completely determine all model structures:

\begin{Thm*}\label{thm:main}
    There are precisely six weak factorization systems and nine model structures on the category of sets, tabled in \cref{prop:wfs} and \cref{thm:modelstructures}.
\end{Thm*}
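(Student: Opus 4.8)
The plan is to prove the two tables in turn: first classify all weak factorization systems (\cref{prop:wfs}), and then derive the model structures (\cref{thm:modelstructures}) from them. Since morphisms of $\Set$ are just functions, everything rests on a few explicit lifting computations, so I would start by determining the right lifting class of three test maps: $o\colon\emptyset\to\{0\}$, $c\colon\{0\}\to\{0,1\}$, and the fold $m\colon\{0,1\}\to\{0\}$. Inspecting commuting squares shows that $p\colon X\to Y$ lifts on the right against $o$ exactly when $p$ is surjective, against $m$ exactly when $p$ is injective, and against $c$ exactly when $p$ is surjective or $X=\emptyset$. The right classes of $o,m,c$ and of their pairwise combinations yield six candidate systems; their left classes are $\iso$, $\surj$, the injections with nonempty domain, $\inj$, the maps with nonempty domain, and $\any$ (with $\mathrm{id}_\emptyset$ adjoined where needed), and the matching right classes are $\any$, $\inj$, the class of surjections together with maps out of $\emptyset$, $\surj$, the class of isomorphisms together with maps out of $\emptyset$, and $\iso$. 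I would confirm each is a weak factorization system by writing down the factorization directly and appealing to the small object argument, valid since $\Set$ is locally finitely presentable.

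The hard part is completeness: that these six exhaust all weak factorization systems. I would reduce this to the lemma that the right lifting behavior of an arbitrary function is already detected by $o$, $m$, and $c$. The proof of the lemma rests on sorting functions into five types (isomorphisms; injective non-surjective with nonempty domain; injective non-surjective with empty domain; surjective non-injective; and neither) and noting that a map of the last type factors through its image as a surjection followed by an injection, hence lies in the saturation generated by the others. Granting this, every Galois-closed right class is determined by which of $o,m,c$ it contains, and the eight subsets collapse to exactly the six classes above: the right classes of $\{o\}$ and $\{o,c\}$ coincide, as do those of $\{o,m\}$ and $\{o,m,c\}$. I expect the recurring difficulty here to be the exceptional role of $\emptyset$: a lifting condition that reads ``surjective'' for maps with nonempty domain becomes vacuous when the domain is empty, and it is precisely this that separates $\inj$ from the injections with nonempty domain, and $\surj$ from the class of surjections together with maps out of $\emptyset$.

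For the model structures I would invoke the description of a model structure on a bicomplete category as an ordered pair of weak factorization systems $(\mathcal C,\mathcal F\cap\mathcal W)$ and $(\mathcal C\cap\mathcal W,\mathcal F)$ with $\mathcal C\cap\mathcal W\subseteq\mathcal C$ such that the composite class $\mathcal W=(\mathcal F\cap\mathcal W)\circ(\mathcal C\cap\mathcal W)$ satisfies two-out-of-three (closure of $\mathcal W$ under retracts being then automatic). Ordering the six left classes by inclusion produces eighteen nested pairs, and for each I would compute $\mathcal W$ as a class of composites. Because every function factors both as an injection after a surjection and as a surjection after an injection, most of these composite classes collapse to $\any$, while the empty-set clauses make a handful collapse instead to $\iso$ or to the class of maps with nonempty domain.

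Finally I would test two-out-of-three on each candidate. The clean outcome I anticipate is that, among the classes that occur, only $\any$, $\iso$, and the maps with nonempty domain (with $\mathrm{id}_\emptyset$) satisfy two-out-of-three, whereas $\inj$, $\surj$, and the three empty-set-decorated classes each fail it by an explicit three-map counterexample. Counting the survivors, the six diagonal pairs give $\mathcal W=\any$, one further pair gives $\mathcal W=\iso$, and two more give $\mathcal W$ equal to the maps with nonempty domain, totalling nine; the remaining nine nested pairs are ruled out. The one genuinely non-formal verification is that the maps with nonempty domain (together with $\mathrm{id}_\emptyset$) are closed under two-out-of-three: the only nontrivial instance, deducing the second factor from the composite and the first factor, holds because an empty codomain of the first factor forces that factor, and with it the source of the composite, to be empty.
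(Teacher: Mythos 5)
Your proposal is correct and follows essentially the same route as the paper: your three test maps $o$, $m$, $c$ encode exactly the case division by type of the generating map in \cref{lem:monogenwfs}, your completeness argument (every right class is an intersection of right classes of subsets of $\{o,m,c\}$, and these close up into six classes) is the intersection-closure argument of \cref{prop:wfs}, and your enumeration of the eighteen nested pairs of WFSs together with the two-out-of-three test on $\sfW=(\sfF\cap\sfW)\circ(\sfC\cap\sfW)$ is precisely how \cref{thm:modelstructures} is proved; your tallies (six diagonal pairs with $\sfW=\any$, one pair with $\sfW=\bij$, two with $\sfW=\any_{\neq\emptyset}\cup\{\id_\emptyset\}$, nine rejected pairs) match the paper's.

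One step of your completeness lemma is under-justified. For a map $\pi$ that is neither injective nor surjective, the observation that $\pi$ factors through its image as a surjection followed by an injection shows only that $\pi$ lies in the saturation $\overline{\{m,c\}}$, i.e., that $\{m,c\}^{\boxslash}\subseteq\pi^{\boxslash}$; your detection lemma requires the \emph{equality} $\pi^{\boxslash}=\{m,c\}^{\boxslash}=\bij\cup\inj_{\emptyset}$, and the converse inclusion does not follow from the factorization. To repair it, either show $m,c\in\overline{\{\pi\}}$ directly---both are retracts of $\pi$ in the arrow category, using a pair of points identified by $\pi$ to retract onto $m$ and a point outside the image of $\pi$ to retract onto $c$---or compute $\pi^{\boxslash}$ by hand as the paper does in \cref{prop:lifting}, where any $\rho$ with the RLP against such a $\pi$ is forced to be a bijection or to have empty domain. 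With that inclusion supplied, the rest of your plan goes through as written.
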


We do not claim any originality to this result; as far as we are aware, it was first proven by Tom Goodwillie \cite{GoodwillieMO2010} and it should be attributed to him, even though others might have known about it independently.

Since the classification of model structures on $\Set$ seems to hold little practical value in the way of applications---after all, one rarely does homotopy theory with sets---the interest shifts to the approach instead. Indeed, while enumerating weak factorization systems and model structures on a given category is in general out of reach, techniques for understanding them systematically may serve as a blueprint for similar classification results in other contexts. This is part of the emerging subject of \emph{homotopical combinatorics} \cite{BHOOR2024}, which among other things studies model structures on certain finite posets, see for instance \cite{FOOQW2022,BOOR2023}.

The classification naturally proceeds along the following three steps:
    \begin{enumerate}
        \item[(1)] Determine which pairs of morphisms $(\pi,\rho)$ satisfy the lifting property, meaning that every solid commutative square
        \[
            \xymatrix{
                A \ar[r]^g \ar[d]_\pi & X \ar[d]^\rho\\
                B \ar[r]^f \ar@{..>}[ru]|-{\exists ?\lambda} & Y,}
        \]
        admits a lift $\lambda$ making the two triangles commute.
        \item[(2)] Determine the poset of weak factorization systems $(\sfL,\sfR)$, which are pairs of classes of morphisms that satisfy mutual lifting properties and such that every morphism factors as a morphism of the left class $\sfL$ followed by a morphism from the right class $\sfR$.
        \item[(3)] Check which pairs of weak factorization systems are compatible in the sense that they give rise to a model structure. This relies on the equivalent description of a model structure, traditionally defined as consisting of a triple $(\sfC,\sfF,\sfW)$ verifying certain axioms, as instead being given by two weak factorization systems $(\sfC \cap \sfW, \sfF)$ and $(\sfC, \sfF \cap \sfW)$. 
    \end{enumerate}
For the category $\Set$ these steps are carried out in \cref{sec:lifting}, \cref{sec:wfs}, and \cref{sec:modelstructures}, respectively. Furthermore, we classify all Quillen equivalences between the resulting model structures in \cref{sec:quillenequivalenes}. As a byproduct, we obtain simple examples of:
    \begin{itemize}
        \item a pair of weak factorization systems of the form $(\sfC \cap \sfW, \sfF)$ and $(\sfC, \sfF \cap \sfW)$ for which $\sfW$ does \emph{not} satisfy 2 out of 3, and
        \item a pair of model categories connected by a zigzag of Quillen equivalences but with no direct Quillen equivalence between them.
\end{itemize}
This also highlights that a secondary aim of this note is pedagogical: Our arguments are self-contained and might be used by a novice as an entry point to the subject of model categories and homotopical combinatorics. We then conclude our paper in \cref{sec:misc} with a pointer to a couple of further classification results in this spirit.

\subsection*{Acknowledgements}

We would like to thank Scott Balchin for encouraging us to write up a proper account of this material and useful comments on an earlier draft. TB is grateful to the Max Planck Institute for Mathematics and is supported by the European Research Council (ERC) under Horizon Europe (grant No.~101042990). Part of the work on this paper was carried out at the Isaac Newton Institute for Mathematical Sciences, Cambridge,  during the programme `Equivariant homotopy theory in context', supported by EPSRC grant EP/Z000580/1.

\section{Lifting problems}\label{sec:lifting}

In this section, we determine which lifting problems in $\Set$ admit a solution.

\begin{Def}
    Let $\pi\colon A \to B$ and $\rho\colon X \to Y$ be two morphisms in a category $\cat C$. A \emph{lifting problem} is given by a solid commutative square in $\cat C$
        \begin{equation}\label{eq:liftingsquare}
            \vcenter{
            \xymatrix{
                A \ar[r]^g \ar[d]_\pi & X \ar[d]^\rho\\
                B \ar[r]^f \ar@{..>}[ru]|-{\exists ?\lambda} & Y,}
            }
        \end{equation}
    and asks if there is a \emph{lift} (or \emph{filler}) $\lambda\colon B \to X$ making the two resulting triangles commute, i.e., such that $\lambda \circ \pi= g$ and $\rho \circ \lambda = f$. We say that $\pi$ has the \emph{left lifting property} (LLP) against $\rho$ (resp.~$\rho$ has the \emph{right lifting property} (RLP) against $\pi$) if every lifting problem of the form \eqref{eq:liftingsquare} has a solution $\lambda$. If that is the case, we write $\pi \boxslash \rho$; note that this includes the case that the collection of lifting problems involving $\pi$ and $\rho$ is empty. Given a class of morphisms $\cat S$ in $\cat C$, we write 
        \[
            {\cat S}^{\boxslash} \coloneqq \SET{\rho}{\sigma \boxslash \rho \text{ for all } \sigma \in \cat S} = \bigcap_{\sigma \in \cat S}\{\sigma\}^{\boxslash};
        \]
    for the \emph{right orthogonal} of $\cat S$; the \emph{left orthogonal}  ${}^{\boxslash}\cat S$ is defined dually. Finally, we write $\overline{\cat S} \coloneqq {}^{\boxslash}(\cat S^{\boxslash})$.
\end{Def}

Working in the category $\cat C = \Set$ of sets for the remainder of this section, it is convenient to think of the data of a lifting problem \eqref{eq:liftingsquare} as a map $f\colon B\to Y$ together with a collection of maps on fibers
    \[
        (g_b\colon \pi^{-1}(b) \to \rho^{-1}(f(b)))_{b \in B}.
    \]
As a first step, we can characterize when such a lifting problem has a solution:

\begin{Lem}\label{lem:lifting}
    Let $\pi\colon A \to B$ and $\rho\colon X \to Y$ be two functions. A solution to the lifting problem \eqref{eq:liftingsquare} exists if and only if the following conditions hold:
        \begin{enumerate}
            \item for all $b \in B$, we have $|g(\pi^{-1}(b))| \leq 1$, i.e., $g$ collapses each non-empty fiber of $\pi$ to a single point in $X$; and
            \item for all $y \in \im(f)$, we have $\rho^{-1}(y) \neq \emptyset$, i.e., every point in the image of the bottom map has a non-empty fiber.
        \end{enumerate}
\end{Lem}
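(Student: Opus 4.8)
The plan is to analyze a candidate lift $\lambda$ fiber by fiber over $B$, using the reformulation suggested just before the statement: specifying a function $\lambda\colon B \to X$ is the same as choosing an element $\lambda(b) \in X$ for each $b \in B$, and the two triangle conditions become pointwise constraints on these choices. First I would record the two translations. The equation $\rho \circ \lambda = f$ is equivalent to requiring $\lambda(b) \in \rho^{-1}(f(b))$ for every $b \in B$, while $\lambda \circ \pi = g$ is equivalent to requiring $\lambda(b) = g(a)$ for every $b$ and every $a \in \pi^{-1}(b)$; that is, $g$ must be constant on the fiber $\pi^{-1}(b)$ with value $\lambda(b)$. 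Since the choices for distinct $b$ are independent, a lift exists if and only if, for each $b \in B$ separately, one can select some $\lambda(b) \in \rho^{-1}(f(b))$ agreeing with $g$ on all of $\pi^{-1}(b)$.

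The necessity direction then follows immediately from these translations. Given a lift $\lambda$, the relation $g(a) = \lambda(b)$ for all $a \in \pi^{-1}(b)$ shows $g$ is constant on each fiber, so $|g(\pi^{-1}(b))| \leq 1$, which is (1); and for any $y = f(b) \in \im(f)$ the membership $\lambda(b) \in \rho^{-1}(f(b)) = \rho^{-1}(y)$ exhibits that fiber as non-empty, which is (2).

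For sufficiency I would construct $\lambda(b)$ by a case split on whether $\pi^{-1}(b)$ is empty. If $\pi^{-1}(b) \neq \emptyset$, condition (1) guarantees that $g$ takes a single value $x_b$ on this fiber, and commutativity of the square gives $\rho(x_b) = \rho(g(a)) = f(\pi(a)) = f(b)$ for any $a \in \pi^{-1}(b)$, so setting $\lambda(b) = x_b$ satisfies both constraints at once. If $\pi^{-1}(b) = \emptyset$, there is no constraint coming from $g$, and since $f(b) \in \im(f)$, condition (2) furnishes some element of $\rho^{-1}(f(b))$ which we take as $\lambda(b)$.

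There is no genuinely hard step here, but two points warrant care. The first is purely organizational: condition (2) does real work only over the fibers with $b \notin \im(\pi)$, because over $\im(\pi)$ the non-emptiness of $\rho^{-1}(f(b))$ is already forced for free by commutativity, as the computation above shows; getting the logic of which condition covers which fibers straight is the main thing to watch. The second is that assembling the choices $\lambda(b)$ simultaneously across all empty-fiber $b$ formally invokes the axiom of choice, which I would flag but not belabor.
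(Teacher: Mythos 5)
Your proof is correct and follows essentially the same route as the paper: the fiberwise case split on whether $\pi^{-1}(b)$ is empty, taking $\lambda(b)$ to be the common value of $g$ on a non-empty fiber and an element of $\rho^{-1}(f(b))$ supplied by condition (2) otherwise, is exactly the paper's construction, with your pointwise translation of the two triangle identities simply making explicit the verification the paper calls straightforward. Your added observations---that condition (2) only does genuine work over $b \notin \im(\pi)$, and that the simultaneous choices invoke the axiom of choice---are accurate refinements, not deviations.
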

\begin{proof}
    If the two conditions are satisfied, first choose some element $x_{b} \in \rho^{-1}(f(b))$ for each $b \in B$. We can then construct a lift $\lambda \colon B \to Y$ via
        \[
            \lambda(b) \coloneqq   
                \begin{cases}
                    g(\pi^{-1}(b)) & \text{if } \pi^{-1}(b) \neq \emptyset; \\
                    x_{b} & \text{otherwise}.
                \end{cases}
        \]
    The verification that this indeed provides a solution to the lifting problem is straightforward. 

    Conversely, the existence of a lift implies the two conditions: the commutativity of the top triangle in \eqref{eq:liftingsquare} means that $g$ must collapse each non-empty fiber of $\pi$ to a single point in $X$. To be able to extend the solution to points of $B$ with empty fibers, these points must go to points of $Y$ possessing non-empty fibers.
\end{proof}

This lemma lets us describe precisely when two morphisms in $\Set$ have the lifting property against each other. 

\begin{Prop}\label{prop:lifting}
    Consider two functions $\pi\colon A \to B$ and $\rho\colon X \to Y$. Then $\pi \boxslash \rho$ if and only if one of the following conditions is satisfied:
        \begin{enumerate}
            \item $A \neq \emptyset = X$; or 
            \item out of $\pi$ and $\rho$, at least one is injective and at least one is surjective.   
        \end{enumerate}
\end{Prop}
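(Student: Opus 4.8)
The plan is to reduce everything to \cref{lem:lifting}. By definition $\pi \boxslash \rho$ means that every square \eqref{eq:liftingsquare} admits a lift, which by the lemma is equivalent to every such square satisfying both of its conditions: that the top map $g$ collapse each non-empty fiber of $\pi$, and that $\im(f) \subseteq \im(\rho)$. I would first dispose of alternative~(a): when $A \neq \emptyset = X$ there are no functions $A \to X$, so no commutative squares exist at all and $\pi \boxslash \rho$ holds vacuously. Since (a) then holds too, this case is settled; for the remainder I assume we are not in case~(a), that is, $A = \emptyset$ or $X \neq \emptyset$, and aim to prove $\pi \boxslash \rho \iff \text{(b)}$. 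Because the first condition of \cref{lem:lifting} constrains only $g$ and the second only $f$, the universal quantifier over squares distributes over their conjunction, so it suffices to characterize separately when \emph{every} square satisfies the fiber-collapse condition and when \emph{every} square satisfies the image-containment condition.

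For the fiber-collapse condition I claim that every square satisfies it if and only if at least one of $\pi,\rho$ is injective. One implication is immediate: if $\pi$ is injective its non-empty fibers are singletons, while if $\rho$ is injective then in any commutative square the restriction of $g$ to $\pi^{-1}(b)$ takes values in $\rho^{-1}(f(b))$, a set with at most one element; either way $g$ collapses the fiber. For the converse I argue contrapositively: if neither map is injective then $A$ and $X$ are non-empty, and choosing $a_1 \neq a_2$ with $\pi(a_1) = \pi(a_2)$ and $x_1 \neq x_2$ with $\rho(x_1) = \rho(x_2)$, I would build an explicit commutative square whose top map sends $a_1 \mapsto x_1$ and $a_2 \mapsto x_2$, thereby violating the condition.

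For the image-containment condition I claim that, under our standing assumption that we are not in case~(a), every square satisfies it if and only if at least one of $\pi,\rho$ is surjective. If $\rho$ is surjective then $\im(\rho) = Y$ and containment is automatic; if $\pi$ is surjective then every $b \in B$ has the form $\pi(a)$, so $f(b) = \rho(g(a)) \in \im(\rho)$. Conversely, if neither is surjective, I would pick $b_0 \notin \im(\pi)$ and $y_0 \notin \im(\rho)$ and build a square whose bottom map sends $b_0 \mapsto y_0$, violating containment; the only point needing care is that such a square requires a map $g \colon A \to X$ to exist, which is guaranteed precisely because we have excluded case~(a).

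Assembling the two characterizations, for configurations outside case~(a) we obtain $\pi \boxslash \rho$ if and only if one of $\pi,\rho$ is injective and one is surjective, which is exactly alternative~(b); together with the vacuous case~(a) this yields $\pi \boxslash \rho \iff \text{(a) or (b)}$. I expect the core fiberwise arguments to be short and routine; the main obstacle is instead the careful handling of the empty-set edge cases---in particular isolating $A \neq \emptyset = X$ as the genuinely exceptional situation in which the lifting property holds simply for lack of any lifting problem, and checking that the violating squares above can always be constructed outside that degenerate case.
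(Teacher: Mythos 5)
Your proposal is correct and takes essentially the same approach as the paper: both reduce to \cref{lem:lifting} and then construct the same violating squares to characterize, via the contrapositive, when each of the lemma's two conditions can fail. Your only reorganization is to isolate the vacuous case $A \neq \emptyset = X$ up front and distribute the universal quantifier over the lemma's two conditions, whereas the paper folds that case into its conjunctive criterion and simplifies at the end using the injectivity of $\emptyset \to Y$; the substance is identical.
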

\begin{proof}
    Consider the two conditions of \cref{lem:lifting}. We will determine when we can find a commuting square \eqref{eq:liftingsquare} with one of these two conditions failing. If both $\pi$ and $\rho$ have a fiber with at least two points, there is a lifting problem in which some non-empty fiber of $\pi$ is not collapsed to a single point; otherwise, this collapsing condition is satisfied for every lifting problem involving $\pi$ and $\rho$. In order to construct a lifting problem failing the second condition, we need both the following to be true:
        \begin{itemize}
            \item there exist $b_0 \in B$ and $y_0 \in Y$ such that $\pi^{-1}(b_0) = \emptyset$ and $\rho^{-1}(y_0) = \emptyset$; and 
            \item $A = \emptyset$ (so that all fibers of $\pi$ are empty) or $X \neq \emptyset$ (so that $X$ can accommodate the non-empty fibers of $\pi$).
        \end{itemize}
    Indeed, if $A = \emptyset$, we may take $g\colon B \to Y$ to map each $b \in B$ to $y_0$; note that $B$ is non-empty by the first condition. If $X \neq \emptyset$, choose  $x \in X$ and $y_1 = \rho(x)$. We then define  
        \[
            g(b) = 
                \begin{cases}
                    y_0 & \text{if } \pi^{-1}(b) = \emptyset; \\
                    y_1 & \text{otherwise},
                \end{cases}
        \]
    while $f(a) = x$ for each $a \in \pi^{-1}(b) \neq \emptyset$.

    Passing to the contrapositive, we see that $\pi \boxslash \rho$ if and only if the following two conditions (corresponding to the two conditions of the previous lemma) hold:
        \begin{enumerate}
            \item[(1)] at least one of $\pi$ or $\rho$ is injective; and
            \item[(2)] at least one of $\pi$ and $\rho$ is surjective, or
            $A \neq \emptyset = X$. 
        \end{enumerate}
    Since any function $\emptyset \to Y$ is injective, we can simplify this criterion to the statement of the proposition.
\end{proof}

\section{Weak factorization systems}\label{sec:wfs}

The goal of this section is to apply our analysis of lifting problems from the previous section to classify all weak factorization systems on the category of sets. For convenience, we recall their definition:

\begin{Def}\label{def:wfs}
    A \emph{weak factorization system} (WFS) on a category $\cat C$ consists of two classes of morphisms $\sfL$ and $\sfR$ of $\cat C$ satisfying the following two conditions:
        \begin{enumerate}
            \item Every morphism in $\cat C$ can be factored as $r\circ l$ with $r \in \sfR$ and $l \in \sfL$;
            \item $\sfL$ is the class of maps with the LLP against $\sfR$ and $\sfR$ is the class of maps with the RLP against $\sfL$; in symbols: $\sfL = {}^{\boxslash}\sfR$ and $\sfL^{\boxslash} = \sfR$.
        \end{enumerate}
    Finally, we say that a weak factorization system is \emph{(left) singly generated} if it is of the form $(\overline{\pi},\pi^{\boxslash})$ for some morphism $\pi$ in $\cat C$.
\end{Def}

A priori, not every morphism $\pi$ needs to generate a WFS, only those do for which every morphism can be written as a composite of a morphism in $\overline{\pi}$ and one in $\pi^{\boxslash}$; but it will turn out that in the category of sets all functions do generate a WFS.\footnote{In fact, the small object argument guarantees this for any locally presentable category.} We begin with the determination of all singly generated WFS on the category of sets. Once we are in the possession of the full list, it will turn out that there is just one more (i.e., not singly generated) WFS. We first introduce some notation.

\begin{Nota}\label{nota:wfs}
    We let $\bij$, $\inj$, $\surj$, and $\any$ denote the class of bijections, injections, surjections, or arbitrary morphisms in $\Set$. Moreover, we write $\inj_\emptyset$ for inclusions of the empty set into some other arbitrary set, while $\cat S_{\neq \emptyset}$ stands for the subset of morphisms in $\cat S$ with non-empty domain.
\end{Nota}

\begin{Lem}\label{lem:monogenwfs}
    Let $\pi\colon A \to B$ be a function. Then $\pi$ generates a weak factorization system on $\Set$ of the following form:
        \begin{enumerate}
            \item if $\pi$ is a bijection, then $(\overline{\pi},\pi^{\boxslash}) = (\bij,\any)$;
            \item if $\pi$ is an injection which is not surjective and
                \begin{enumerate}
                    \item $A = \emptyset$, then $(\overline{\pi},\pi^{\boxslash}) = (\inj,\surj)$;
                    \item $A \neq \emptyset$, then $(\overline{\pi},\pi^{\boxslash}) = (\inj_{\neq \emptyset}\cup \{\id_{\emptyset}\},\surj \cup \inj_{\emptyset})$;
                \end{enumerate}
            \item if $\pi$ is a surjection which is not injective, then $(\overline{\pi},\pi^{\boxslash}) = (\surj,\inj)$;
            \item if $\pi$ is not injective nor surjective, then $(\overline{\pi},\pi^{\boxslash}) = (\any_{\neq \emptyset}\cup \{\id_{\emptyset}\},\bij \cup \inj_{\emptyset})$.
        \end{enumerate}   
\end{Lem}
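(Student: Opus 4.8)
The plan is to treat the two axioms of a weak factorization system separately. The orthogonality axiom requires no genuine work: the operators $(-)^{\boxslash}$ and ${}^{\boxslash}(-)$ form a Galois connection on classes of morphisms, so $\overline{\pi} = {}^{\boxslash}(\pi^{\boxslash})$ is the left-closure of $\{\pi\}$, and the closure-operator identity $\bigl({}^{\boxslash}(\pi^{\boxslash})\bigr)^{\boxslash} = \pi^{\boxslash}$ holds formally. Hence, setting $\sfL \coloneqq \overline{\pi}$ and $\sfR \coloneqq \pi^{\boxslash}$, both conditions $\sfL = {}^{\boxslash}\sfR$ and $\sfL^{\boxslash} = \sfR$ are automatic, and the real content of the lemma is to (i) identify $\pi^{\boxslash}$ and $\overline{\pi}$ explicitly in each of the four cases and (ii) exhibit the factorizations witnessing that $\pi$ does generate a WFS.

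For step (i) I would first compute $\pi^{\boxslash}$ by feeding each type of $\pi$ into \cref{prop:lifting}. The guiding observation is that clause (b) of that proposition---at least one of $\pi,\rho$ injective and at least one surjective---transfers onto $\rho$ precisely whatever injectivity or surjectivity $\pi$ fails to have: if $\pi$ is a bijection, clause (b) is automatic and $\pi^{\boxslash}=\any$; if $\pi$ is injective but not surjective, it forces $\rho$ surjective; if $\pi$ is surjective but not injective, it forces $\rho$ injective; and if $\pi$ is neither, it forces $\rho$ bijective. Clause (a), $A \neq \emptyset = X$, then contributes exactly the extra empty-domain maps $\inj_{\emptyset}$ whenever $A \neq \emptyset$; note that $A \neq \emptyset$ holds automatically once $\pi$ is non-injective, and that the condition $A = \emptyset$ versus $A \neq \emptyset$ is exactly what separates cases (2a) and (2b). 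Reading off the four cases yields $\any$; then $\surj$ or $\surj \cup \inj_{\emptyset}$; then $\inj$; and finally $\bij \cup \inj_{\emptyset}$.

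Computing $\overline{\pi} = {}^{\boxslash}(\pi^{\boxslash})$ is again a matter of applying \cref{prop:lifting}, now asking which $\sigma$ lift on the left against every member of the class just found. I would organize this asymmetrically: to exclude a candidate $\sigma$ it suffices to test it against one bad target in $\pi^{\boxslash}$---a non-injective surjection forces $\sigma$ injective, a non-surjective injection forces $\sigma$ surjective, and an inclusion $\emptyset \hookrightarrow \{*\}$ in $\inj_{\emptyset}$ imposes the empty-domain constraint---while the reverse inclusions follow by verifying the lifting criterion uniformly for all targets. This yields $\bij$, $\inj$, $\surj$, and $\any_{\neq\emptyset}\cup\{\id_\emptyset\}$ respectively.

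For step (ii) I would simply write down explicit factorizations of an arbitrary $f\colon S\to T$: the factorization $f = f\circ\id_S$ for $(\bij,\any)$; the factorization $S \xrightarrow{\iota} S \sqcup T \xrightarrow{r} T$ with $\iota$ the inclusion and $r$ given by $f$ on $S$ and $\id_T$ on $T$ for the injective--surjective systems; the image factorization $S \twoheadrightarrow \im(f) \hookrightarrow T$ for $(\surj,\inj)$; and $f = \id_T \circ f$ for the last system. In the two ``non-empty'' systems (2b) and (4) the degenerate case $S = \emptyset$ must be factored separately as $\emptyset \xrightarrow{\id_\emptyset} \emptyset \to T$, so that the left factor is $\id_\emptyset$ rather than a forbidden map $\emptyset \to T$. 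I expect the main obstacle to be nothing conceptual but rather the disciplined bookkeeping of the empty set: recognizing that $\id_{\emptyset}$, being a bijection, lifts against everything and must therefore always be split off from the other maps out of $\emptyset$ in cases (2b) and (4), and keeping clause (a) of \cref{prop:lifting} correctly applied in exactly those cases where a domain or codomain is permitted to be empty.
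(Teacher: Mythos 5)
Your proposal is correct and follows essentially the same route as the paper: compute $\pi^{\boxslash}$ and then $\overline{\pi}={}^{\boxslash}(\pi^{\boxslash})$ case by case from \cref{prop:lifting}, with the same bookkeeping for empty domains, and then check the factorization axiom. In fact you are more explicit than the paper in two places it leaves to the reader---the Galois-connection argument that the orthogonality conditions of \cref{def:wfs}(b) hold automatically for a pair of the form $(\overline{\pi},\pi^{\boxslash})$, and the concrete factorizations (the coproduct factorization $S \to S \sqcup T \to T$, the image factorization, and the separate treatment of $\id_\emptyset$ in the two ``non-empty'' systems), where the paper merely asserts the factorization property and points to the small object argument in a footnote.
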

\begin{proof}
    In each of the these five cases, the description of $(\overline{\pi},\pi^{\boxslash})$ follows from \cref{prop:lifting}. For instance, suppose that $\pi$ is a bijection. This means that Condition $(b)$ of \cref{prop:lifting} is satisfied, hence any function has the RLP against $\pi$, i.e., $\pi^{\boxslash} = \any$. By considering some morphism with non-empty domain and which is neither injective nor surjective, another application of \cref{prop:lifting} then implies that $\overline{\pi} = {}^{\boxslash}\any = \bij$. If $\pi$ is injective but not surjective, we have to distinguish two cases: if $A = \emptyset$, then we are in the situation of \cref{prop:lifting}(b), so $\pi^{\boxslash} = \surj$ and consequently $\overline{\pi} = {}^{\boxslash}\surj = \inj$. If $A \neq \emptyset$, we have $\pi \boxslash \rho$ for some $\rho\colon X \to Y$ if and only if $X = \emptyset$ or $\rho$ is surjective. In other words, in this case we have $\pi^{\boxslash} = \surj \cup \inj_{\emptyset}$ and thus obtain 
        \[
            \overline{\pi} = {}^{\boxslash}(\surj \cup \inj_{\emptyset}) = {}^{\boxslash}(\surj) \cap {}^{\boxslash}(\inj_{\emptyset}) = \inj \cap (\any_{\neq \emptyset} \cup \surj) = \inj_{\neq \emptyset}\cup \{\id_{\emptyset}\}.
        \]
    The last two cases, (c) and (d), are dealt with similarly. Finally, it remains to observe that all of these pairs $(\bar{\pi}, \pi^{\boxslash})$ do in fact produce WFS, i.e., satisfy the factorization property \cref{def:wfs}(b) as well. 
\end{proof}

By direct inspection, we see that $(\any,\bij)$ is another, not singly generated WFS on $\Set$, and we claim that we have found all of them now:

\begin{Prop}\label{prop:wfs}
    There are precisely six weak factorization systems on the category of sets, given by the five singly generated ones from \cref{lem:monogenwfs} together with $(\any,\bij)$.
\end{Prop}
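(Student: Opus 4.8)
The strategy is to show that the six weak factorization systems already exhibited — the five singly generated ones from \cref{lem:monogenwfs} together with $(\any,\bij)$ — exhaust all possibilities. Since any WFS $(\sfL,\sfR)$ satisfies $\sfR = \sfL^{\boxslash}$ and $\sfL = {}^{\boxslash}\sfR$, the class $\sfL$ is determined by which morphisms it contains, and $\sfR$ is then forced. So the task reduces to classifying the possible left classes $\sfL$. The key observation is that, by \cref{prop:lifting}, the lifting relation $\pi \boxslash \rho$ depends only on a small amount of combinatorial data about each function: whether its domain is empty, whether its codomain is empty, whether it is injective, and whether it is surjective. I would therefore partition $\any$ into finitely many ``types'' according to these invariants and argue that $\sfL$ must be a union of such types.

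\smallskip

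First I would record the coarse combinatorial types of functions relevant to the lifting criterion: bijections, the inclusion $\id_\emptyset$ of the empty set into itself, other maps $\inj_\emptyset$ out of the empty set, injections with non-empty domain that are not surjective, surjections that are not injective, and maps that are neither injective nor surjective (necessarily with non-empty domain, hence a subclass of $\any_{\neq\emptyset}$). The heart of the argument is then a closure property: for any class $\cat S$, the double orthogonal $\overline{\cat S} = {}^{\boxslash}(\cat S^{\boxslash})$ cannot distinguish two morphisms that have the same lifting behaviour against every function. Concretely, I would show that if $\sfL = {}^{\boxslash}\sfR$ and $\pi \in \sfL$, then every morphism $\pi'$ of the same combinatorial type as $\pi$ also lies in $\sfL$, because $\pi$ and $\pi'$ satisfy $\pi \boxslash \rho \iff \pi' \boxslash \rho$ for all $\rho$ by \cref{prop:lifting}. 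This already cuts the number of candidate left classes down to the (finitely many) unions of types that are closed under orthogonality.

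\smallskip

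Having reduced to a finite check, I would run through the candidate unions of types and test each for being orthogonally closed, i.e. $\sfL = {}^{\boxslash}(\sfL^{\boxslash})$, discarding those that either fail this equation or fail the factorization axiom \cref{def:wfs}(a). A useful bookkeeping device here is that $\sfL$ must always contain all bijections (every bijection satisfies $\pi \boxslash \rho$ for all $\rho$ by \cref{prop:lifting}(b), and dually lies in every left class of an orthogonal pair). The five nontrivial singly generated systems correspond to the five ``generic'' choices of a single generating type, and the computations in \cref{lem:monogenwfs} already pin those down; what remains is to verify that no other union of types is orthogonally closed, and that the one genuinely extra closed class is $\sfL = \any$, whose right orthogonal is $\bij$. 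The matching of $(\any,\bij)$ and the verification that it satisfies factorization (any function factors as itself followed by an identity, or as an identity followed by itself) is immediate.

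\smallskip

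\textbf{Main obstacle.} The delicate point is the interaction of the empty set with the two invariants, which is exactly the source of the asymmetry in \cref{lem:monogenwfs}(b) and (d): the classes $\inj_\emptyset$ and $\{\id_\emptyset\}$ behave differently from their non-empty counterparts, so a naive ``injective/surjective'' dichotomy over-counts or under-counts. I expect the bulk of the care to go into confirming that mixed unions involving these empty-domain types — for instance $\surj \cup \inj_\emptyset$ versus $\surj$ alone — are precisely the ones already appearing in \cref{lem:monogenwfs} and that no further such combination is orthogonally closed. Once the empty-set bookkeeping is handled cleanly, the remaining enumeration is a routine finite verification.
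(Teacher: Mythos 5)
Your proposal is correct, but it runs the classification on the opposite side of the orthogonality from the paper. You classify \emph{left} classes: you observe (correctly, via \cref{prop:lifting}) that the relation $\pi \boxslash \rho$ depends only on the ``type'' of $\pi$ (injectivity, surjectivity, emptiness of domain), deduce that any $\sfL = {}^{\boxslash}\sfR$ is a union of types containing $\bij$, and then propose a finite enumeration of the $2^4$ such unions, keeping those with $\sfL = {}^{\boxslash}(\sfL^{\boxslash})$ and the factorization property. The paper instead classifies \emph{right} classes in one stroke: for any WFS, $\sfR = \sfL^{\boxslash} = \bigcap_{\pi \in \sfL} \pi^{\boxslash}$, and by \cref{lem:monogenwfs} each $\pi^{\boxslash}$ is one of only five classes; it then checks that this collection, once the minimal right class $\bij$ is adjoined, is closed under intersection, so $\sfR$ (and hence $\sfL = {}^{\boxslash}\sfR$) must already be on the list. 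The paper's route buys a much smaller verification (a handful of pairwise intersections rather than sixteen double-orthogonal computations), and it leaves your type-invariance observation implicit in \cref{lem:monogenwfs}, where $\pi^{\boxslash}$ visibly depends only on the type of $\pi$; your route makes that invariance explicit and is more systematic, at the cost of a longer, though entirely routine, case check. Two small points of hygiene in your version: listing $\id_\emptyset$ as a type separate from the bijections is redundant (it is a bijection and has identical lifting behaviour, as your own criterion shows), and you should state explicitly that after the closure check the factorization axiom still needs verifying for each survivor --- for the five singly generated systems this is the final sentence of \cref{lem:monogenwfs}, and for $(\any,\bij)$ it is the trivial factorization you indicate. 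With the enumeration actually carried out (one finds exactly the six closed unions $\bij$, $\surj$, $\inj$, $\inj_{\neq\emptyset}\cup\{\id_\emptyset\}$, $\any_{\neq\emptyset}\cup\{\id_\emptyset\}$, and $\any$), your argument is complete and yields the same list as \cref{prop:wfs}.
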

\begin{proof}
    We can order the six WFSs $(\sfL,\sfR)$ we have found above via inclusion of the left classes $\sfL$; i.e., we define $(\sfL,\sfR) \leq (\sfL',\sfR')$ for WFSs whenever $\sfL \subseteq \sfL'$ (or, equivalently, $\sfR' \subseteq \sfR$). The resulting poset then has the following Hasse diagram:
        \begin{equation}\label{eq:hasse}
            \vcenter{
            \xymatrixcolsep{1pc}
            \xymatrix{
                & (\any,\bij) \\
                (\any_{\neq \emptyset}\cup \{\id_{\emptyset}\},\bij \cup \inj_{\emptyset}) \ar[ur] & & (\inj,\surj) \ar[ul] \\
                (\surj,\inj) \ar[u] & & (\inj_{\neq \emptyset}\cup \{\id_{\emptyset}\},\surj \cup \inj_{\emptyset}) \ar[u] \ar[ull] \\
                & (\bij,\any). \ar[ul] \ar[ur] &   \\             
            }
            }
        \end{equation}
    Now, given an arbitrary WFS $(\sfL,\sfR)$ on $\Set$, we have $\sfR = \bigcap_{\pi \in \sfL} \pi^\boxslash$, so that $\sfR$ is an intersection of the various right classes $\pi^\boxslash$ we found in \cref{lem:monogenwfs}. However, one can check by looking at the Hasse diagram that with the addition of the new minimal right class ${\bij}$, the collection of the right classes of weak factorization systems we found becomes closed under intersections. This shows that we have a complete list of WFSs for the category of sets, as desired.
\end{proof}

\section{Model structures}\label{sec:modelstructures}

With this preparation in hand, we are now ready to classify all model structures on the category of sets. We begin with the definition of model structures in terms of weak factorization systems:

\begin{Def}\label{def:modelstructure}
    Three classes of morphisms $\sfW$, $\sfC$ and $\sfF$ in a category $\cat C$ form a model structure $(\sfC,\sfF,\sfW)$ if and only if $\sfW$ satisfies the 2 out of 3 property and both $(\sfC \cap \sfW, \sfF)$ and $(\sfC, \sfF \cap \sfW)$ are WFSs. The elements of $\sfW$, $\sfC$, $\sfF$, $\sfC \cap \sfW$, and $\sfF \cap \sfW$ are referred to as weak equivalences, cofibrations, fibrations, acyclic cofibrations and acyclic fibrations, respectively.
\end{Def}

\begin{Rem}\label{rem:relation}
    For any model structure $(\sfC,\sfF,\sfW)$, we always have the useful relation $\sfW = (\sfF \cap \sfW) \circ (\sfC \cap \sfW)$. 
\end{Rem}

\begin{Thm}[Goodwillie]\label{thm:modelstructures}
    There are precisely nine model structures on the category of sets, as collected in the following table along with their homotopy categories:
        \begin{table}[h!]
            \centering
                \begin{tabular}{|lll |l|}
                    \hline
                    \text{Cofibrations} & \text{Fibrations} & \text{Weak equivalences} & \text{Homotopy category}\\
                    \hline
                    $\bij$ & $\any$ & $\any$ & $(-2)$-$\types$ \\
                    $\surj$ & $\inj$ & $\any$ & $(-2)$-$\types$\\
                    $\inj_{\neq\emptyset}\cup\{\id_\emptyset\}$ & $\surj\cup{\inj}_\emptyset$ & $\any$ & $(-2)$-$\types$\\
                    $\any_{\neq\emptyset}\cup{\{\id_\emptyset\}}$ & $\bij\cup\inj_\emptyset$ & $\any$ & $(-2)$-$\types$\\
                    $\inj$ & $\surj$ & $\any$ & $(-2)$-$\types$\\
                    $\any$ & $\bij$ & $\any$ & $(-2)$-$\types$\\
                    $\inj$ & $\surj\cup{\inj}_\emptyset$ & $\any_{\neq\emptyset}\cup{\{\id_\emptyset\}}$ & $(-1)$-$\types$\\
                    $\any$ & $\bij\cup\inj_\emptyset$ & $\any_{\neq\emptyset}\cup\{\id_\emptyset\}$ & $(-1)$-$\types$\\
                    $\any$ & $\any$ & $\bij$ & $0$-$\types$\\
                    \hline
                \end{tabular}
        \end{table}
\end{Thm}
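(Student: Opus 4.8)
The plan is to use \cref{def:modelstructure} to reduce the classification to a finite search over pairs of the six weak factorization systems of \cref{prop:wfs}. A model structure $(\sfC,\sfF,\sfW)$ furnishes two WFSs $W_1 = (\sfC\cap\sfW, \sfF)$ and $W_2 = (\sfC, \sfF\cap\sfW)$, and since $\sfC\cap\sfW\subseteq\sfC$ these are comparable, $W_1\leq W_2$, in the poset \eqref{eq:hasse}. Conversely the whole triple is recovered from the ordered pair: one reads off $\sfC$ as the left class $\sfL_2$ of $W_2$ and $\sfF$ as the right class $\sfR_1$ of $W_1$, while \cref{rem:relation} gives $\sfW = (\sfF\cap\sfW)\circ(\sfC\cap\sfW) = \sfR_2\circ\sfL_1$, the class of maps factoring as an acyclic cofibration followed by an acyclic fibration. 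Thus model structures inject into the set of comparable ordered pairs $(W_1,W_2)$, and the task becomes: for which such pairs does the resulting $\sfW = \sfR_2\circ\sfL_1$ assemble into a model structure?

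First I would enumerate the comparable pairs directly from the Hasse diagram \eqref{eq:hasse}: there are eighteen, namely the six diagonal pairs $W_1=W_2$ together with twelve strict pairs $W_1<W_2$. For a diagonal pair the factorization axiom of \cref{def:wfs} shows every morphism lies in $\sfR\circ\sfL$, so $\sfW = \any$; then $2$-out-of-$3$ is automatic and the recovery identities $\sfC\cap\any = \sfC$, $\sfF\cap\any = \sfF$ hold trivially, so each of the six WFSs yields exactly one model structure. These give the six ``discrete'' rows of the table, all with $\sfW = \any$.

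Next I would treat the twelve strict pairs one at a time, computing $\sfW = \sfR_2\circ\sfL_1$ from the explicit classes of \cref{prop:lifting}. The key observation is that these composites collapse onto a short list of familiar classes---$\inj$, $\surj$, $\surj\cup\inj_\emptyset$, and $\bij\cup\inj_\emptyset$, none of which satisfies $2$-out-of-$3$, together with $\any_{\neq\emptyset}\cup\{\id_\emptyset\}$ and $\bij$, which do. For each failing class one exhibits a short counterexample: e.g. for $\inj$ the maps $\{1\}\to\{1,2\}\to\{1\}$ show that $f$ and $g\circ f$ injective need not force $g$ injective. Exactly three strict pairs survive, producing $\sfW = \any_{\neq\emptyset}\cup\{\id_\emptyset\}$ twice (with $(\sfC,\sfF) = (\inj, \surj\cup\inj_\emptyset)$ and $(\any, \bij\cup\inj_\emptyset)$) and $\sfW = \bij$ once (with $(\sfC,\sfF) = (\any,\any)$); for these I would confirm the recovery identities $\sfC\cap\sfW = \sfL_1$ and $\sfF\cap\sfW = \sfR_2$, so that the two required WFSs are literally $W_1$ and $W_2$, hence genuine by \cref{prop:wfs}. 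This matches the last three rows, giving nine model structures in total.

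The main obstacle I anticipate is the careful bookkeeping around the empty set, both when computing the composites $\sfR_2\circ\sfL_1$ and when checking $2$-out-of-$3$: several candidate classes look closed under $2$-out-of-$3$ until one tests the configuration $\emptyset\to Y\to Z$ with $Y$ nonempty, where the property can fail even though every map out of $\emptyset$ lies in the class. This is precisely what separates $\bij\cup\inj_\emptyset$ (which fails) from $\any_{\neq\emptyset}\cup\{\id_\emptyset\}$ (which succeeds). Finally, to fill in the homotopy-category column I would compute the localization $\Set[\sfW^{-1}]$ in the three cases: inverting $\bij$ changes nothing and returns $\Set$, the category of $0$-types; inverting $\any$ collapses $\Set$ to the terminal category of $(-2)$-types; and inverting $\any_{\neq\emptyset}\cup\{\id_\emptyset\}$ identifies all nonempty sets while keeping $\emptyset$ distinct, yielding the poset $\{0<1\}$ of $(-1)$-types.
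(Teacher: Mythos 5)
Your proposal is correct and follows essentially the same route as the paper: both reduce the classification to comparable ordered pairs of the six weak factorization systems from \cref{prop:wfs}, recover $\sfW$ from \cref{rem:relation} as $\sfW = (\sfF\cap\sfW)\circ(\sfC\cap\sfW)$, and eliminate candidate pairs by testing 2-out-of-3 (with the empty set producing exactly the failures you flag), arriving at the same six structures with $\sfW=\any$ plus the three exceptional ones. Your uniform enumeration of all twelve strict pairs, together with the explicit check of the recovery identities $\sfC\cap\sfW=\sfL_1$ and $\sfF\cap\sfW=\sfR_2$, is just a slightly more systematic organization of the paper's case analysis (which instead splits into the cases where one constituent WFS is $(\bij,\any)$, is $(\any,\bij)$, or neither), not a different argument.
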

\begin{proof}
    For any model structure $(\sfC,\sfF,\sfW)$, the constituent weak factorization systems satisfy $(\sfC \cap \sfW, \sfF) \leq (\sfC, \sfF \cap \sfW)$ (with the relation $\leq$ being defined as in the proof of \cref{prop:wfs}), so we can read off candidate pairs of WFSs from the Hasse diagram \eqref{eq:hasse}. We first consider the case that $(\sfC \cap \sfW, \sfF) = (\sfC, \sfF \cap \sfW)$. From \cref{rem:relation}, we then obtain
        \[
            \sfW = (\sfF \cap \sfW) \circ (\sfC \cap \sfW) = (\sfF \cap \sfW) \circ \sfC = \any,
        \]
    where the last equality is the factorization property of the WFS $(\sfC,\sfF \cap \sfW)$. Thus, there are six model structures where $\sfW = \any$ and $(\sfC,\sfF)$ is any of the six WFS found in \cref{prop:wfs}. Note that all of these have the terminal category $(-2)$-$\types$ as homotopy category.

    Now consider those model structures with $(\sfC \cap \sfW, \sfF) \lneq (\sfC, \sfF \cap \sfW)$. We distinguish the following cases:
        \begin{enumerate}
            \item Suppose $(\sfC \cap \sfW, \sfF) = (\bij, \any)$. In this case, we get $\sfW = (\sfF \cap \sfW) \circ (\sfC \cap \sfW) = \sfF \cap \sfW$, since every class contains all bijections and is closed under composition. So the right class of $(\sfC,\sfF \cap \sfW)$ must satisfy the 2 out of 3 property. However, among all the right classes we have, only $\any$, and $\bij$ satisfy 2 out of 3. We are in the case that the two WFSs of the model structure are different, so from the list of WFS we obtain precisely one new model structure in this case:
                \[
                    (\sfC, \sfF, \sfW) = (\any, \any, \bij).
                \]
            The corresponding homotopy category is the category of sets (or $0$-$\types$).
            \item Suppose $(\sfC, \sfF \cap \sfW) = (\any, \bij)$. Here, analogously to the previous case, we have $\sfW = \sfC \cap \sfW$. Of all the left classes we have, only $\bij$, $\any$ and $(\any_{\neq\emptyset}\cup \{\id_\emptyset\})$ satisfy 2 out of 3. So we get one new model structure:
                \[
                    (\sfC, \sfF, \sfW) = ({\any}, {\bij}\cup {{\inj}_\emptyset}, \any_{\neq\emptyset}\cup \{\id_\emptyset\}).
                \]
            The homotopy category of this model structure is the walking arrow, with one object for the empty set, one for all other sets, and whose only non-identity morphisms is the inclusion of the empty set into the non-empty set; this category is also known as $(-1)$-$\types$.
            \item Suppose that both constituent WFSs are not maximal and not minimal in the Hasse diagram \eqref{eq:hasse}. There are three pairs of such compatible WFSs:
                \begin{enumerate}
                    \item $(\sfC \cap \sfW, 
                    \sfF) = ({\surj}, {\inj})$ and $(\sfC, \sfF \cap \sfW) = (\any_{\neq\emptyset}\cup \{\id_\emptyset\}, {\bij}\cup \inj_\emptyset)$.
                    In this case, we get $\sfW = \surj\cup \inj_\emptyset$,  which does not satisfy 2 out of 3, so these WFSs cannot form a model structure. 
                    \item $(\sfC \cap \sfW, \sfF) = (\inj_{\neq\emptyset}\cup \{\id_\emptyset\}, \surj\cup \inj_\emptyset)$ and $(\sfC, \sfF \cap \sfW) = (\any_{\neq\emptyset}\cup \{\id_\emptyset\}, \bij\cup \inj_\emptyset)$. In this case, we get $\sfW = \inj_{\neq\emptyset}\cup \{\id_\emptyset\}$, which again does not satisfy 2 out of 3.
                    \item $(\sfC \cap \sfW, \sfF) = (\inj_{\neq\emptyset}\cup \{\id_\emptyset\}, \surj\cup \inj_\emptyset)$ and $(\sfC, \sfF \cap \sfW) = (\inj, \surj)$. We get $\sfW = \any_{\neq\emptyset}\cup \{\id_\emptyset\}$, which does satisfy 2 out of 3. Consequently, we obtain the last model structure:
                        \[
                            (\sfC, \sfF, \sfW) = (\inj, \surj\cup \inj_\emptyset, \any_{\neq\emptyset}\cup \{\id_\emptyset\}).
                        \]
                    As in case (b), the associated homotopy category is category of $(-1)$-$\types$. 
                \end{enumerate}
        \end{enumerate}
    This exhausts all possible cases, and thus concludes the proof.
\end{proof}

\begin{Rem}\label{rem:curiosity}
    The cases (c.1) and (c.2) in the proof above give examples of pairs of WFSs which are of the form $(\sfC \cap \sfW,\sfF)$ and $(\sfC, \sfF \cap \sfW)$ for three classes $\sfC$, $\sfF$, $\sfW$, and which satisfy $\sfW = (\sfC \cap \sfW) \circ (\sfF \cap \sfW)$, but which do not form a model structure because $\sfW$ does not satisfy 2 out of 3.
\end{Rem}

\section{Quillen equivalences}\label{sec:quillenequivalenes}

Morphisms and equivalences of model structures are given by Quillen adjunctions, whose definition we now recall:

\begin{Def}\label{def:quillenadjunction}
    Given two categories $\cat C$ and $\cat D$ equipped with model structures, an adjunction $F \dashv G$ between functors $F\colon \cat C \to \cat D$ and $G\colon \cat D \to \cat C$ is said to be a \emph{Quillen adjunction} if any of the following equivalent conditions is satisfied:
        \begin{enumerate}
            \item $F$ preserves cofibrations and acyclic cofibrations;
            \item $G$ preserves fibrations and acyclic fibrations;
            \item $F$ preserves cofibrations and $G$ preserves fibrations;
            \item $F$ preserves acyclic cofibrations and $G$ preserves acyclic fibrations.
        \end{enumerate}
    A Quillen adjunction $F \dashv G$ is called a \emph{Quillen equivalence} if either $F$ or $G$ induces an equivalence of homotopy categories (in which case both do). Finally, we say that two model structures are \emph{equivalent} if there exists a zig-zag of Quillen equivalences between them.  
\end{Def}

Finding all Quillen equivalences between the model structures on the category of sets is easy because there are not that many functors to consider: the left adjoint in any Quillen adjoint must preserve colimits and is thus determined by the value on the singleton set. Indeed, if that value is $A$, the functor must be $A \times -$, since any set $S$ is the coproduct of $|S|$ singletons.

\begin{Thm}\label{thm:quillenequivalences}
    Two model structures on $\Set$ are equivalent, i.e., related by a zig-zag of Quillen equivalences, if and only if they have the same class of weak equivalences. Any such equivalence is realized by a single Quillen equivalence except for in the following three cases, in which a zig-zag of length 2 is required:
        \begin{enumerate}
            \item $(\surj,\inj,\any)$ and $(\inj_{\neq \emptyset}\cup \{\id_{\emptyset}\},\surj \cup \inj_{\emptyset},\any)$;
            \item $(\surj,\inj,\any)$ and $(\inj,\surj,\any)$;
            \item $(\any_{\neq \emptyset}\cup \{\id_{\emptyset}\},\bij \cup \inj_{\emptyset},\any)$ and $(\inj,\surj,\any)$.
        \end{enumerate}
    Moreover, in all cases the Quillen adjunctions may be taken to be the identity functors $\id \dashv \id$. 
\end{Thm}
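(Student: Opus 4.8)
The plan is to lean on two preliminary observations. First, as noted just before the statement, every left adjoint $\Set\to\Set$ has the form $A\times-$ with right adjoint $(-)^A$, the identity corresponding to $A=\{*\}$. Second, the homotopy category of any of the nine structures is the localization $\Set[\sfW^{-1}]$, which depends only on the class $\sfW$ of weak equivalences. I would begin by recording that the three possible classes $\sfW$ yield pairwise non-equivalent homotopy categories: inverting $\any$ collapses everything to the terminal category, inverting $\any_{\neq\emptyset}\cup\{\id_\emptyset\}$ leaves exactly the two classes $\emptyset$ and ``nonempty'' joined by a single non-invertible arrow, and inverting $\bij$ returns $\Set$ itself. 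Counting isomorphism classes of objects ($1$, $2$, and a proper class) already shows these are pairwise inequivalent.

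For the \emph{only if} direction, a zig-zag of Quillen equivalences induces a zig-zag, hence (inverting the backward equivalences) a composite, of equivalences of homotopy categories. By the previous paragraph this is possible only when the two model structures share the same $\sfW$, which also shows any realizing zig-zag may be taken to stay within one $\sfW$-class.

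For the \emph{if} direction I would fix $\sfW$ and analyze the identity adjunction. By \cref{def:quillenadjunction}, $\id\dashv\id\colon M_1\to M_2$ is a Quillen adjunction precisely when $\sfC_1\subseteq\sfC_2$: the cofibration condition is exactly this inclusion, and the acyclic-cofibration condition $\sfC_1\cap\sfW\subseteq\sfC_2\cap\sfW$ is then automatic since $\sfW$ is shared. Whenever it is a Quillen adjunction it is automatically a Quillen equivalence, as both derived functors are the identity on $\Set[\sfW^{-1}]$. Thus two structures with a common $\sfW$ are joined by a single identity Quillen equivalence exactly when their cofibration classes are comparable. Since the six structures with $\sfW=\any$ have as cofibration classes precisely the six left classes of \cref{prop:wfs}, I can read the comparabilities straight off the Hasse diagram \cref{eq:hasse}: within each $\sfW$-class all cofibration classes are comparable except for the three incomparable pairs $\{\surj,\inj_{\neq\emptyset}\cup\{\id_\emptyset\}\}$, $\{\surj,\inj\}$, and $\{\any_{\neq\emptyset}\cup\{\id_\emptyset\},\inj\}$ — precisely the listed pairs, all lying in $\sfW=\any$. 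For each of these I would exhibit a length-$2$ zig-zag through a common comparable vertex: both cofibration classes contain $\bij$ and are contained in $\any$, so either the bottom structure $(\bij,\any,\any)$ or the top structure $(\any,\bij,\any)$ serves as the apex of a span (or cospan) of identity Quillen equivalences.

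The main obstacle is to argue that a \emph{single} Quillen equivalence cannot suffice for the three exceptional pairs, which means ruling out all left adjoints $A\times-$, not merely the identity. The clean part is the computation that for $A\neq\emptyset$ the functor $A\times-$ preserves injectivity, surjectivity, and emptiness of (co)domains of a map, hence preserves each of the six cofibration classes exactly; so $A\times-$ is a Quillen adjunction $M_1\to M_2$ if and only if $\sfC_1\subseteq\sfC_2$, exactly as for the identity, and no nonconstant $A$ can bridge an incomparable pair in one step. I expect the genuinely delicate point to be the degenerate functor $A=\emptyset$: the constant adjunction $\emptyset\times-\dashv(-)^\emptyset$ sends every map to $\id_\emptyset$, respectively $\id_{\{*\}}$, and is a Quillen adjunction between \emph{any} two of the nine structures. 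This is the crux: it must be set aside — in line with the theorem's final assertion that the relevant adjunctions may, and here should, be taken to be identity functors — in order to conclude that the three pairs genuinely require a zig-zag of length $2$.
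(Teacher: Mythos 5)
Your proposal follows the paper's proof almost step for step: the ``only if'' direction via inequivalent homotopy categories, the ``if'' direction via identity Quillen adjunctions between structures whose cofibration classes are comparable in the Hasse diagram \eqref{eq:hasse} (with connectivity of the diagram supplying the zig-zags), and the reduction of all candidate left adjoints to $A\times-$, followed by the check that for $A\neq\emptyset$ the map $A\times f$ lies in exactly the same of the six left classes as $f$, so no nonempty $A$ can bridge an incomparable pair in one step. All of this is correct and is in substance what the paper does; the only cosmetic difference is that the paper tests representative cofibrations (e.g.\ $\emptyset\to\{\ast\}$ and a non-injective surjection) rather than proving your cleaner ``preserves each class exactly'' statement.

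Where you diverge is the degenerate case $A=\emptyset$, and here you have put your finger on a genuine gap---but ``setting it aside'' is not a permissible move, and the theorem's closing sentence gives you no license for it: that sentence concerns the zig-zags that \emph{do} exist, not the exclusion of candidate direct equivalences. Your computation is right: $\emptyset\times-\dashv(-)^\emptyset$ sends every map to $\id_\emptyset$, resp.\ $\id_{\{\ast\}}$, both bijections, hence lying in every (acyclic) cofibration, resp.\ (acyclic) fibration class; so by \cref{def:quillenadjunction} this is a Quillen adjunction between any two of the nine structures, and since for $\sfW=\any$ both homotopy categories are terminal, the derived functor is trivially an equivalence. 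Under the paper's own definition of Quillen equivalence, this \emph{is} a direct Quillen equivalence between each of the three exceptional pairs, and no strengthening of the definition helps, since the derived adjunction is genuinely an adjoint equivalence of terminal categories. So your proof is incomplete at exactly this point---but so, strictly speaking, is the paper's: its proof asserts that the left adjoint of a Quillen equivalence ``has to be of the form $A\times-$ for some non-empty set $A$'' without justification, and none is possible when the homotopy categories are trivial. To make the ``zig-zag of length $2$ is required'' clause correct, one must explicitly restrict to adjunctions with $A\neq\emptyset$ (as the paper tacitly does); you should flag this as a needed amendment to the statement rather than adopt it as a mid-proof convention.
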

\begin{proof}
    By definition, equivalent model structures have equivalent homotopy categories, so we can read of from our classification \cref{thm:modelstructures} that two model structures on $\Set$ are equivalent only if they have the same class of weak equivalences. For the converse, we distinguish three cases, according to the three classes of weak equivalences that can arise in the classification:
        \begin{enumerate} 
            \item[(1)] For $\sfW=\bij$, there is only a single model structure, so the claim holds trivially. 
            \item[(2)] For $\sfW=\any_{\neq\emptyset}\cup\{\id_\emptyset\}$, the identity functor sends $\inj$ to $\any$ and $\bij \cup \inj_{\emptyset}$ to $\surj \cup \inj_{\emptyset}$, so is both a left and a right Quillen functor. 
            \item[(3)] For $\sfW=\any$, we see that the classes of cofibrations and fibrations in each of the six model structures must be of the form $(\sfC,\sfF)$ for one of the WFSs of \cref{prop:wfs}. It follows that the identity functor induces a Quillen equivalence between such whenever they are comparible in the Hasse diagram \eqref{eq:hasse}. Since the graph underlying the latter is connected, all of these model structures are equivalent, as claimed. 
        \end{enumerate}
    It remains to consider the three incomparable pairs of WFSs in Case $(3)$, as listed above. The Hasse diagram \eqref{eq:hasse} implies that each of them is related by a zig-zag of length 2, so in order to finish the proof, we need to prove that no direct Quillen equivalence between them exists. As remarked above, the left adjoint in any Quillen equivalence on $\Set$ has to be of the form $A \times -$ for some non-empty set $A$. By inspection, such a functor does not preserve any of the cofibrations participating in the pairs of model structures in $(a)$, $(b)$, and $(c)$ above: For example, for Case $(c)$, the injection $\emptyset \to  \{\ast\}$ is not mapped into $(\any_{\neq \emptyset}\cup \{\id_{\emptyset}\})$; for the converse, we may consider any surjection that is not injective. The other cases are dealt with similarly, thereby finishing the proof. 
\end{proof}

\section{Vistas}\label{sec:misc}

The arguments above may serve as a blueprint for similar classification results in other contexts. We collect two such examples which are again due to Goodwillie (see the comments in \cite{GoodwillieMO2024}) and mention some further work by others.

\subsection*{Pointed sets}

Write $\Set_{\ast}$ for the category of pointed sets and pointed maps, i.e., maps $f\colon (A,a) \to (B,b)$ which preserve the designated point: $f(a) = b$. As before (\cref{nota:wfs}), we write $\bij, \inj, \surj,$ and $\any$ for the classes of bijective, injective, surjective, or arbitrary morphisms of pointed sets. In addition, we let $\inj_{\ast}$ (resp.~$\inj_{\neq \ast}$) be the class of pointed maps $f\colon (A,a) \to (B,b)$ such that $f^{-1}(b) = \{a\}$ (resp.~$f$ is injective on $f^{-1}(B\setminus \{b\})$). Similar arguments to the ones above lead to a classification of model structures on $\Set_{\ast}$. This is due to Goodwillie; a write-up including detailed proofs can be found in notes by Idrissi \cite{Idrissi}. 

\begin{Thm}[Goodwillie]\label{thm:pointedsets}
    There are precisely six weak factorization systems on the category of pointed sets, as given in the following Hasse diagram:
        \begin{equation}\label{eq:pointedsets_hasse}
            \vcenter{
            \xymatrixcolsep{1pc}
            \xymatrix{
                & (\any,\bij) \\
                (\inj_{\neq \ast},\surj \cap \inj_{\ast}) \ar[ur] & & (\surj,\inj) \ar[ul] \\
                 (\inj,\surj) \ar[u] & & (\surj \cap \inj_{\neq \ast},\inj_{\ast}) \ar[u] \ar[ull] \\
                & (\bij,\any). \ar[ul] \ar[ur] &   \\             
            }
            }
        \end{equation}
    These weak factorization systems give rise to the following seven model structures, which provide a full list of model structures on the category $\Set_{\ast}$:
        \begin{table}[h!]
            \centering
                \begin{tabular}{|lll |l|}
                    \hline
                    \text{Cofibrations} & \text{Fibrations} & \text{Weak equivalences} & \text{Homotopy category}\\
                    \hline
                    $\any$ & $\any$ & $\bij$ & $\pointed$ $0$-$\types$\\
                    $\any$ & $\bij$ & $\any$ & $(-2)$-$\types$ \\
                    $\bij$ & $\any$ & $\any$ & $(-2)$-$\types$ \\
                    $\inj$ & $\surj$ & $\any$ & $(-2)$-$\types$ \\
                    $\surj$ & $\inj$ & $\any$ & $(-2)$-$\types$ \\
                    $\surj \cap \inj_{\neq \ast}$ & $\inj_{\ast}$ & $\any$ & $(-2)$-$\types$ \\
                    $\inj_{\neq \ast}$ & $\surj \cap \inj_{\ast}$ & $\any$ & $(-2)$-$\types$ \\
                    \hline
                \end{tabular}
        \end{table}
\end{Thm}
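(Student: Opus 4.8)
The plan is to transport the three-step method of \cref{sec:lifting,sec:wfs,sec:modelstructures} to $\Set_{\ast}$, exploiting the one structural novelty: the one-point set is now a zero object, and for every pointed map $\pi\colon (A,a)\to(B,b)$ the basepoint $a$ lies in the fibre $\pi^{-1}(b)$, so basepoint fibres are automatically non-empty. First I would prove the pointed version of \cref{lem:lifting}: writing a lifting problem as a base map $f$ together with maps on fibres, a solution exists iff (1) $g$ collapses each non-empty fibre of $\pi$ and (2) every element of $\im(f)$ has a non-empty $\rho$-fibre. These are the same two conditions as before, but now (2) holds automatically over the basepoint of $Y$ since $x\in\rho^{-1}(y)$, and the forced value $\lambda(b)=x$ is always consistent with the construction of the lift.

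Second, I would derive the pointed orthogonality criterion replacing \cref{prop:lifting} by building obstruction squares. The crucial feature is that both the collapsing condition and the image condition split into a basepoint and a non-basepoint part: an obstruction to (1) supported in the basepoint fibres exists exactly when $\rho\notin\inj_{\ast}$ (its basepoint fibre has a second point), one supported away from the basepoint exists exactly when $\pi\notin\inj_{\neq\ast}$, and a failure of (2) is controlled by surjectivity off the basepoint. This accounting is precisely what forces the two refined classes $\inj_{\ast}$ and $\inj_{\neq\ast}$ to appear, with $\inj=\inj_{\ast}\cap\inj_{\neq\ast}$. I expect this to be the main obstacle: the clean duality between ``empty fibre'' and ``collapsed fibre'' in $\Set$ is broken by the distinguished basepoint, so the enumeration of which obstruction squares can be built is genuinely more delicate and has to be carried out separately over the basepoint and over its complement.

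Third, with the orthogonality relation available, I would compute $(\overline{\pi},\pi^{\boxslash})$ for a representative generator of each injectivity/surjectivity type and read off the right classes $\bij,\ \surj\cap\inj_{\ast},\ \inj,\ \surj,\ \inj_{\ast},\ \any$. As in the proof of \cref{prop:wfs}, completeness then reduces to checking that this collection is closed under intersection---for instance $\inj\cap\surj=\bij$ recovers the top system $(\any,\bij)$---so that every WFS, whose right class is an intersection of singly generated ones, already occurs in the list. This produces the six weak factorization systems and the Hasse diagram \eqref{eq:pointedsets_hasse}.

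Finally, for the model structures I would rerun the argument of \cref{thm:modelstructures}. The case $(\sfC\cap\sfW,\sfF)=(\sfC,\sfF\cap\sfW)$ again forces $\sfW=\any$ through \cref{rem:relation}, giving one model structure per WFS, each with terminal homotopy category $(-2)$-$\types$; this accounts for six of the seven. For strictly comparable pairs one checks that the only left and right classes satisfying 2 out of 3 are $\bij$ and $\any$, which yields the single mixed structure $(\any,\any,\bij)$ with homotopy category the pointed $0$-$\types$, and that each of the three strictly intermediate comparable pairs in \eqref{eq:pointedsets_hasse} gives a composite $\sfW=(\sfF\cap\sfW)\circ(\sfC\cap\sfW)$ failing 2 out of 3, just as in cases (c.1)--(c.2) above. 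Because $\Set_{\ast}$ has no empty object, there is no analogue of the intermediate left class $\any_{\neq\emptyset}\cup\{\id_{\emptyset}\}$; this is exactly why the two $(-1)$-$\types$ model structures vanish and the total drops from nine to seven. The homotopy categories are then immediate, the only weak-equivalence classes occurring being $\bij$ and $\any$.
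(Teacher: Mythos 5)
Your proposal is correct and takes essentially the same route the paper intends for \cref{thm:pointedsets}: it transports the three-step method (the fibrewise lifting lemma, the enumeration of singly generated weak factorization systems and closure of the right classes under intersection, and the compatibility-plus-2-out-of-3 check on comparable pairs in the Hasse diagram) from $\Set$ to $\Set_{\ast}$, which is precisely the ``similar arguments'' the paper delegates to Idrissi's notes. The one caution is that your obstruction accounting in the second step should be phrased jointly in $\pi$ and $\rho$ (e.g., a basepoint-fibre obstruction requires both $\pi \notin \inj_{\ast}$ and $\rho \notin \inj_{\ast}$, and a non-basepoint one requires $\pi \notin \inj_{\neq \ast}$ together with $\rho \notin \inj$), but you flag this delicacy yourself, and carrying it out confirms your conclusions, including that all six pointed WFSs are singly generated and that the three intermediate comparable pairs give $\sfW \in \{\inj_{\ast}, \inj_{\neq\ast}, \surj\}$, each failing 2 out of 3.
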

    
\begin{Rem}
    Variations on this theme are possible. For instance, one may consider the category of $G$-sets for $G$ a finite group; see \cite{MehrleSpitz2025} for results in this direction.
\end{Rem}

\subsection*{Vector spaces}\label{ssec:vectorspaces}

Let $k$ be a field and denote by $\Vect$ the category of $k$-vector spaces; since the next classification result is insensitive to the field, we will omit it from the notation. Write $\iso, \mon, \epi,$ and $\hom$ for the class of isomorphisms, monomorphisms, epimorphisms, or homomorphisms in $\Vect$, respectively. The classification of weak factorization systems and model structures on $\Vect$ is similar (but easier) than the one for $\Set$---this is again unpublished work of Goodwillie. 

\begin{Thm}[Goodwillie]\label{thm:vectorspaces}
    There are precisely four weak factorization systems on the category of vector spaces $\Vect$ (over any arbitrary field $k$), as given in the following Hasse diagram:
        \begin{equation}\label{eq:vectorspaces_hasse}
            \vcenter{
            \xymatrixcolsep{1pc}
            \xymatrix{
                & (\hom,\iso) \\
                (\mon,\epi) \ar[ur] & & (\epi,\mon) \ar[ul] \\
                & (\iso,\hom). \ar[ul] \ar[ur] &   \\             
            }
            }
        \end{equation}
    There are precisely five model structures on $\Vect$, as collected in the following table along with their homotopy categories: 
        \begin{table}[h!]
            \centering
                \begin{tabular}{|lll |l|}
                    \hline
                    \text{Cofibrations} & \text{Fibrations} & \text{Weak equivalences} & \text{Homotopy category}\\
                    \hline
                    $\hom$ & $\hom$ & $\iso$ & $\Vect$ \\
                    $\hom$ & $\iso$ & $\hom$ & $\mathsf{0}$ \\
                    $\iso$ & $\hom$ & $\hom$ & $\mathsf{0}$ \\
                    $\mon$ & $\epi$ & $\hom$ & $\mathsf{0}$ \\
                    $\epi$ & $\mon$ & $\hom$ & $\mathsf{0}$ \\
                    \hline
                \end{tabular}
        \end{table}
\end{Thm}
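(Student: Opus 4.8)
The plan is to mirror exactly the three-step strategy used for $\Set$, but to exploit the fact that $\Vect$ is an additive (indeed abelian) category, which makes the lifting analysis substantially cleaner. First I would establish the analogue of \cref{prop:lifting}: for linear maps $\pi$ and $\rho$, determine precisely when $\pi \boxslash \rho$. The key simplification over $\Set$ is that in an additive category one may reduce any lifting square to the case where the two parallel composites agree by subtracting, so the lifting property becomes a statement about splitting short exact sequences. Concretely, I expect to show that $\pi \boxslash \rho$ holds if and only if at least one of $\pi,\rho$ is a monomorphism and at least one is an epimorphism; unlike the set-theoretic case there is no exceptional ``$A \neq \emptyset = X$'' clause, because the zero space plays the role of both initial and terminal object and no pathology analogous to empty fibers arises. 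The mechanism is that a monomorphism in $\Vect$ always splits, so its image is a direct summand, and a lift can be constructed by composing with the splitting; dually for epimorphisms.

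Next I would classify the singly generated weak factorization systems, paralleling \cref{lem:monogenwfs}. Every linear map $\pi$ falls into exactly one of four types according to whether it is an isomorphism, a non-surjective monomorphism, a non-injective epimorphism, or neither injective nor surjective, and in each case the orthogonality criterion just established reads off $(\overline{\pi},\pi^{\boxslash})$ directly: an isomorphism gives $(\iso,\hom)$, a proper mono gives $(\mon,\epi)$, a proper epi gives $(\epi,\mon)$, and a map that is neither gives $(\hom,\iso)$. Because there is no empty-domain subtlety, the four classes collapse neatly and there are no $\mathsf{S}_{\neq\emptyset}$-type refinements to track. To complete the weak factorization system verification I would invoke that every map factors through its image $\pi = (\text{epi}) \circ (\text{mono})$, or more robustly cite the small object argument available in the locally presentable category $\Vect$, exactly as in the footnote to \cref{def:wfs}. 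One then checks that the four right classes $\{\iso,\epi,\mon,\hom\}$ are already closed under intersection, so no extra non-generated weak factorization system appears, giving the Hasse diagram \eqref{eq:vectorspaces_hasse}.

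For the model structures I would repeat the case analysis of \cref{thm:modelstructures}, using \cref{rem:relation} and the requirement that $\sfW$ satisfy $2$ out of $3$. When the two constituent weak factorization systems coincide, $\sfW = \sfF\cap\sfW \circ \sfC\cap\sfW = \hom$, and each of the four weak factorization systems yields a model structure with $\sfW = \hom$ and trivial homotopy category $\mathsf 0$. When they differ strictly, the poset \eqref{eq:vectorspaces_hasse} is a diamond with only the comparable pairs $(\iso,\hom) < (\mon,\epi)$, $(\iso,\hom)<(\epi,\mon)$, each $<(\hom,\iso)$, and the minimal-to-maximal pair $(\iso,\hom)<(\hom,\iso)$; computing $\sfW$ via \cref{rem:relation} in each comparable pair, one finds that only the extreme pair $(\iso,\hom) < (\hom,\iso)$ produces a $\sfW$ closed under $2$ out of $3$, namely $\sfW = \iso$, giving the fifth model structure with homotopy category $\Vect$. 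The main obstacle I anticipate is purely at the first step: verifying carefully that the additive structure really does eliminate the empty-fiber exceptional case, i.e.\ confirming that splitting of monos and epis makes the lifting criterion symmetric and clause-free; once that lifting lemma is pinned down, steps (2) and (3) are routine diagram-chasing and finite case-checking against the diamond poset.
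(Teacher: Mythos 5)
Your plan is correct and takes essentially the approach the paper itself intends: the paper offers no separate proof of \cref{thm:vectorspaces}, stating only that the argument is ``similar (but easier)'' than the $\Set$ case, and your execution of that template checks out --- the lifting criterion (at least one of $\pi,\rho$ in $\mon$ and at least one in $\epi$, with no analogue of the empty-fiber clause since hom-sets in $\Vect$ are never empty), the four singly generated weak factorization systems, the closure of $\{\iso,\epi,\mon,\hom\}$ under intersection, and the five-pair check against the diamond poset (the strict pairs other than the extreme one yield $\sfW=\epi$ or $\sfW=\mon$, which fail 2 out of 3) are all accurate. One micro-correction: the image factorization is of the shape $(\text{mono})\circ(\text{epi})$ and only verifies the factorization axiom for $(\epi,\mon)$, while $(\mon,\epi)$ requires the graph factorization $V\xrightarrow{(\id,f)} V\oplus W\xrightarrow{\mathrm{pr}_W} W$ --- though your fallback to the small object argument covers this in any case.
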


\begin{Rem}\label{rem:Artinian}
    Generalizing the above, we remark that Andrew Salch has announced a classification of all model structures on the category of $A$-modules, where $A$ is an Artinian commutative ring in which the square of each maximal ideal is zero. 
\end{Rem}

\bibliographystyle{alpha}
\bibliography{reference}

\end{document}